\newcommand{\C}{{\mathbb{C}}}
\newcommand{\R}{{\mathbb{R}}}
\newcommand{\Z}{{\mathbb{Z}}}
\begin{document}

\title*{The Teichm\"uller Stack}
\author{Laurent MEERSSEMAN}
\institute{L. Meersseman \at LAREMA, UMR 6093 CNRS, Facult\'e des Sciences, 2 Boulevard Lavoisier 49045 Angers cedex 01, \email{laurent.meersseman@univ-angers.fr}}
%
%
\maketitle


\abstract{This paper is a comprehensive introduction to the results of \cite{LMStacks}. It grew as an expanded version of a talk given at INdAM Meeting Complex and Symplectic Geometry, held at Cortona in June 12-18, 2016. It deals with the construction of the Teichm\"uller space of a smooth compact manifold $M$ (that is the space of isomorphism classes of complex structures on $M$) in arbitrary dimension. The main problem is that, whenever we leave the world of surfaces, the Teichm\"uller space is no more a complex manifold or an analytic space but an analytic Artin stack. 
We explain how to construct explicitly an atlas for this stack using ideas coming from foliation theory. 
Throughout the article, we use the case of $\mathbb S^3\times\mathbb S^1$ as a recurrent example.  }

\section{Introduction}
\label{Intro}

Let $M$ be a compact $C^\infty$ connected oriented manifold. Assume that $M$ is even-dimensional and admits complex structures. We are interested in the Teichm\"uller space $\mathcal T(M)$. To define it, we start with the {\it moduli space} $\mathcal M(M)$ of complex structures on $M$. We can formally define it as the set of complex manifolds diffeomorphic to $M$ up to biholomorphisms. In short, 

\begin{equation}
\label{M}
\mathcal M(M)=\{X\text{ complex manifold }\mid X\simeq_{so} M\}/\sim
\end{equation}
where $X\simeq_{so}M$ means that there exists a $C^\infty$-diffeomorphism from $X$ to $M$ preserving the orientations and where $X\sim Y$ if they are biholomorphic.

Thanks to Newlander-Nirenberg Theorem \cite{NN}, a structure of a complex manifold on $M$ is equivalent to an integrable complex operator $J$ on $M$, that is a $C^\infty$ bundle operator $J$ on the tangent bundle $TM$ such that 
\begin{equation}
\label{ico}
J^2=- Id\qquad\text{ and }\qquad [T^{0,1},T^{0,1}]\subset T^{0,1}
\end{equation}
for
\begin{equation}
\label{T01}
T^{0,1}=\{v+iJv\mid v\in TM\otimes\mathbb C\}
\end{equation}
the subbundle of the complexified tangent bundle $TM\otimes\mathbb C$ formed by the eigenvectors of $J$ with eigenvalue $-i$. Here of course, $J$ has been linearly extended to the complexified tangent bundle. We may thus rewrite \eqref{M} as
\begin{equation}
\label{M2}
\mathcal M(M)=\{J\text{ o.p. integrable complex operator on } M\}/\sim
\end{equation}
where o.p. means orientation preserving, i.e. the orientation induced on $M$ by $J$ coincides with that of $M$.

Now, it is easy to check that $(M,J)$ and $(M,J')$ are biholomorphic if and only if there exists a diffeomorphism $f$ of $M$ whose differential $df$ satisfies
\begin{equation}
\label{Jf}
J'=(df)^{-1}\circ J\circ df
\end{equation}
In other words, denoting $J\cdot f$ the right hand side of \eqref{Jf}, we see that \eqref{Jf} defines an action of the diffeomorphism group $\text{Diff}(M)$ onto $\mathcal I(M)$, the set of o.p. integrable complex operators appearing in \eqref{M2}. Since our operators are o.p., this is even in fact an action of $\text{Diff}^+(M)$, the group of diffeomorphisms of $M$ that preserve the orientation.

So we end with
\begin{equation}
\label{M3}
\mathcal M(M)=\mathcal I(M)/\text{Diff}^+(M)
\end{equation}
and we are in position to define the {\it Teichm\"uller space} of $M$ as 
\begin{equation}
\label{T}
\mathcal T(M)=\mathcal I(M)/\text{Diff}^0(M)
\end{equation}
where $\text{Diff}^0(M)$ is the group of diffeomorphisms $C^\infty$-isotopic to the identity, that is the connected component of the identity in $\text{Diff}^+(M)$.

It is important to notice that \eqref{M3} and \eqref{T} define {\it topological} spaces and not only sets as \eqref{M}. Indeed, we endow $\mathcal I(M)$ and $\text{Diff}(M)$ with the topology of uniform convergence of sequences of operators/functions and all their derivatives (the $C^\infty$-topology) and we endow   \eqref{M3} and \eqref{T} with the quotient topology.

In fact, more can be said. Replacing $C^\infty$ operators, resp.  $C^\infty$ functions with Sobolev $L^2_l$ operators, resp. $L^2_{l+1}$ functions (for $l$ big), then $\mathcal I(M)$ is a Banach complex analytic space in the sense of \cite{Do}. Also $\text{Diff}^0(M)$ is a complex Hilbert manifold and acts by holomorphic transformations on $\mathcal I(M)$. 

\begin{remark}
\label{cc}
The Teichm\"uller space may have several/countably many connected components (defined as the quotient of a connected component of the space of operators $\mathcal I(M)$ by the $\text{Diff}^0(M)$ action). We will always consider a single connected component of it.
\end{remark}

For $M$ a smooth surface, this definition of Teichm\"uller space coincides with the usual one. Then $\mathcal T(M)$ is a complex manifold and enjoys wonderful properties such as the existence of several nice metrics or of explicit interesting compactifications. The situation is completely different in the higher dimensional case.
It is known since quite a long time, at least since the first works of Kodaira-Spencer at the end of the fifties, that the Teichm\"uller space is not even a complex analytic space in general. In order to put an analytic structure in some sense on the space $\mathcal T(M)$, one has to consider it as an analytic stack. And in order to make this stack structure concrete and useful, one has to give an explicit atlas of it.
  
This was done in \cite{LMStacks}. The crucial idea is to understand the action of  $\text{Diff}^0(M)$ onto $\mathcal I$ as defining a foliation (in a generalized sense) on $\mathcal I$. Hence $\mathcal T(M)$ is the leaf space of this foliation, so as a stack, an atlas can be obtained as a (generalized) holonomy groupoid for this foliation. The aim of this paper is to serve as a comprehensive survey of \cite{LMStacks}, putting emphasis on the main ideas, on examples and on applications. Only section  \ref{appli} contains new results: we briefly report on work in progress by C. Fromenteau.

\section{Examples}
\label{ex}
\begin{example}
\label{ec}

To begin with, let us consider the Teichm\"uller space of $\mathbb S^1\times\mathbb S^1$. By Riemann's uniformization theorem, every Riemann surface diffeomorphic to $\mathbb S^1\times\mathbb S^1$ is a compact complex torus, that is the quotient of $\mathbb C$ by a lattice $\mathbb Zv\oplus\mathbb Zw$ with $(v,w)$ a direct $\mathbb R$-basis\footnote{It should be noted that, in the definition of Teichm\"uller space, we consider {\it any} complex structure, not only projective ones. Hence, in the case of $\mathbb S^1\times\mathbb S^1$, we {\it have to prove} that every such structure is projective.}. Indeed, its universal covering cannot be $\mathbb P^1$ for topological reasons, and the case of $\mathbb D$ is discarded because its automorphism group does not contain any $\mathbb Z^2$ subgroup acting freely.

A classical computation shows that the lattice can be assumed to be of the form $\mathbb Z\oplus \mathbb Z\tau$ with $\tau$ belonging to the upper half-plane $\mathbb H$. Moreover, two such lattices give rise to biholomorphic tori if and only they are related through the formula
\begin{equation}
\label{SL2action}
\tau'=\dfrac{a\tau+b}{c\tau+d}=:A\cdot \tau\qquad\text{ with }\qquad A=\begin{pmatrix}
a &b\\
c &d
\end{pmatrix}
\in\text{SL}_2(\mathbb Z)
\end{equation}
Hence, the moduli space $\mathcal M(\mathbb S^1\times\mathbb S^1)$ is the complex orbifold $\mathbb H/\text{SL}_2(\mathbb Z)$, the action being defined through \eqref{SL2action}.

However, the Teichm\"uller space $\mathcal T(\mathbb S^1\times\mathbb S^1)$ is just $\mathbb H$. This is because a non-trivial element $A$ of $\text{SL}_2(\mathbb Z)$ sends the lattice associated to $\tau$ isomorphically onto the lattice associated to $\tau'$ (we use the same notations as in \eqref{SL2action}), but does not send $1$ to $1$ and $\tau$ to $\tau'$. Since $1$ and $\tau$ in $\mathbb C$ descends as two loops on $\mathbb S^1\times\mathbb S^1$ which generates a basis of the first homology group with values in $\mathbb Z$, and since $1$ and $\tau'$ defines the same basis, this means that the biholomorphism induced by $A$ does not act trivially in homology. So it cannot be isotopic to the identity and $\tau$ differs from $A\cdot \tau$ in the Teichm\"uller space.
\end{example}

The Teichm\"uller space of Example \ref{ec} has the wonderful property of being a complex manifold. Moreover, this complex structure is natural in the sense that it is "compatible" with deformation theory. More precisely, every deformation of complex tori, that is every smooth morphism $\mathcal X\to B$ with fibers diffeomorphic to $\mathbb S^1\times\mathbb S^1$ defines a mapping from the parameter space $B$ to $\mathcal T(\mathbb S^1\times\mathbb S^1)$. The point here is that this mapping is {\it holomorphic}.

The fundamental question is to know if this is a general property of Teichm\"uller spaces or something specific to dimension one. Surely a Teichm\"uller space has to be a complex object so we ask

\medskip
\noindent {\it Is it possible to endow any Teichm\"uller space with the structure of an analytic space?}
\medskip

There are many cases for which this is true at least locally. For example, in \cite{CataneseSurvey}, Catanese shows this is ok locally for K\"ahler manifolds with trivial or torsion canonical bundle and for minimal surfaces of general type with no automorphisms or rigidified (i.e. with no automorphism smoothly isotopic to the identity)  with ample canonical bundle.

Recall that an analytic space is a Hausdorff topological space locally modelled onto the zero set of holomorphic functions and that the Hausdorffness requirement does not follow from the local models. There exist non-Hausdorff analytic spaces, as well as non-Hausdorff manifolds, that is objects having the right local model but not separated as topological spaces. More important for us, this is exactly what happens for the Teichm\"uller space of irreducible Hyperk\"ahler manifolds\footnote{Here this is defined as the space of isomorphism classes of Hyperk\"ahler complex structures on a fixed smooth manifold, and not of arbitrary complex structures.} \cite{VerbitskyHK}. So we should allow non-Hausdorff analytic spaces to expect a positive answer to our question.

Nevertheless, this is indeed not enough. The situation is even worse than that, as shown by the following example.

\begin{example}
\label{Hopfexample}
Our guiding example is that of $\mathbb S^3\times \mathbb S^1$. It was proved by Kodaira in \cite{Kod} that any complex surface homeomorphic to $\mathbb S^3\times \mathbb S^1$ is a {\it Hopf surface}, that is the quotient of $\mathbb C^2\setminus\{(0,0)\}$ by the group generated by a holomorphic contraction of $\mathbb C^2$. Besides, every such contraction can be either linearized and then diagonalized with eigenvalues of modulus strictly less than $1$, or reduced to the following resonant normal form
\begin{equation}
\label{Hopfnf}
(z,w)\longmapsto  g_{\lambda,p}(z,w):=(\lambda z+w^p,\lambda^p w)\qquad\text{ for }p\in\mathbb N^*,\ 0<\vert\lambda\vert<1
\end{equation}
Note that $p=1$ corresponds to the linear but non diagonalizable case, whereas $p>1$ corresponds to non-linear cases. As a consequence, the classification of Hopf surfaces up to biholomorphism is as follows. Let $X_g$ be such a surface defined by the contracting biholomorphism $g$ of $\mathbb C^2$. Let $\lambda_1$ and $\lambda_2$ be the eigenvalues of the linear part of $g$ at $(0,0)$ and assume that $\vert\lambda_2\vert\leq \vert\lambda_1\vert$. Then,
\begin{enumerate}
\item If there is no resonance, that is if $\lambda_2^p$ is different from $\lambda_1$ for all $p\in\mathbb N^*$, then $X_g$ is biholomorphic to $X_{\left (\begin{smallmatrix} \lambda_1 &0\\ 0 &\lambda_2\end{smallmatrix}\right )}$.
\item If there is a resonance of order $p$, then $X_g$ is biholomorphic either to $X_{\left (\begin{smallmatrix} \lambda_1 &0\\ 0 &\lambda_2\end{smallmatrix}\right )}$ or to $X_{g_{\lambda,p}}$.
\end{enumerate}
All these models are pairwise non biholomorphic. As a consequence, one can show that $\mathcal T(\mathbb S^3\times \mathbb S^1)$, as a topological space, is as follows\footnote{Recall that we just consider one connected component of the Teichm\"uller space. Here, the Teichm\"uller space has several connected components, cf. the discussion in \cite{LMStacks}.} \cite{LMStacks}. Let
\begin{equation}
\label{detrace}
\begin{CD}
A\in\text{GL}_2^c(\C)@>\pi >> (\det A,\text{Tr }A)\in\C^2
\end{CD}
\end{equation}
where the superscript $c$ means that we only consider contracting matrices. The image of the map $\pi$ in \eqref{detrace}, say $\mathcal D$, is a bounded domain in $\C^2$ that is exactly the Teichm\"uller space of linear diagonal Hopf surfaces.

To add the linear but non diagonalizable case, one has to add a non-separated copy $\mathcal C$ of the curve 
\begin{equation}
\label{nscurve}
\pi_*\{A\in \text{GL}_2^c(\C)\mid 4\det A=(\text{Tr }A)^2\}
\end{equation} 
in $\mathcal D$. This is because a point in this curve corresponds to two non biholomorphic Hopf surfaces. Hence we distinguish $(\lambda^2,2\lambda)\in\mathcal D$ which encodes $X_{\left (\begin{smallmatrix} \lambda &0\\ 0 &\lambda\end{smallmatrix}\right )}$ and the same point in the added curve $\mathcal C$ which encodes 
$X_{\left (\begin{smallmatrix} \lambda &1\\ 0 &\lambda\end{smallmatrix}\right )}$. 
The augmented domain, say $\mathcal D_{\mathcal C}$, is the Teichm\"uller space of linear Hopf surfaces and has the topology of the conjugacy classes of contracting matrices. In particular, a sufficiently small neighborhood of a point $\left (\begin{smallmatrix} \lambda &1\\ 0 &\lambda\end{smallmatrix}\right )$ of $\mathcal C$ in $\mathcal D_{\mathcal C}$ does not contain the corresponding point $\left (\begin{smallmatrix} \lambda &1\\ 0 &\lambda\end{smallmatrix}\right )$ of $\mathcal D$, whereas every neighborhood of $\left (\begin{smallmatrix} \lambda &1\\ 0 &\lambda\end{smallmatrix}\right )\in\mathcal D$ contains $\left (\begin{smallmatrix} \lambda &1\\ 0 &\lambda\end{smallmatrix}\right )\in\mathcal C$.

But we are not done, since the resonant non linear Hopf surfaces are missing. To include them, for each $p>1$, we add a non-separated copy $\mathcal C_p$ of the curve
\begin{equation}
\label{nscurvep}
\{(\lambda^{p+1},\lambda+\lambda^p)\mid 0<\vert\lambda\vert<1\}
\end{equation} 
to encode the contractions $g_{p,\lambda}$. Thus, a point $(\lambda^{p+1},\lambda+\lambda^p)$ in $\mathcal D$ encodes $X_{\left (\begin{smallmatrix} \lambda &0\\ 0 &\lambda^p\end{smallmatrix}\right )}$ whereas its double in $\mathcal C_p$ encodes $X_{g_{\lambda,p}}$.

We therefore obtain finally a non-Hausdorff space
\begin{equation}
\label{topoTeichHopf}
\mathcal D_{(\mathcal C)}=\mathcal D_{\mathcal C}\sqcup \mathcal C_2\sqcup\hdots
\end{equation}
which looks like a bounded domain in $\C^2$ plus a countable number of pairwise disjoint analytic curves and is endowed with the previously explained topology. The space is not Hausdorff along these curves, since a point in such a curve and the corresponding point in $\mathcal D_{\mathcal C}$ cannot be separated. However, two such points do not play symmetric roles. Every open neighborhood of $(\lambda^{p+1},\lambda+\lambda^p)$ in $\mathcal D_{\mathcal C}$ contains the corresponding point of $\mathcal C_p$, whereas a sufficiently small neighborhood of $(\lambda^{p+1},\lambda+\lambda^p)$ in $\mathcal C_p$ does not see the corresponding point of $\mathcal D_{\mathcal C}$.

We note the following important consequence, which is known since Kodaira-Spencer works on deformations in the sixties, but which is still frequently overlooked.

\begin{proposition}
\label{TeichHopfnotAna}
The Teichm\"uller space $\mathcal T(\mathbb S^3\times \mathbb S^1)$ cannot be endowed with the structure neither of an analytic space nor of a non-Hausdorff analytic space.
\end{proposition}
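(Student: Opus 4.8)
The idea is to argue by contradiction: suppose $\mathcal T(\mathbb S^3\times\mathbb S^1)$ admits the structure of a (possibly non-Hausdorff) analytic space, and derive a contradiction from the local structure near the non-separated curves $\mathcal C_p$. The key geometric input is the description of the topology of $\mathcal D_{(\mathcal C)}$ given above: near a point $t_p:=(\lambda^{p+1},\lambda+\lambda^p)\in\mathcal C_p$ there are two points, the one in $\mathcal C_p$ itself and the "double" $t_p'\in\mathcal D$, and the topology is \emph{asymmetric} — every neighborhood of $t_p'\in\mathcal D$ meets $\mathcal C_p$, but some neighborhood of $t_p\in\mathcal C_p$ is disjoint from a neighborhood of $t_p'\in\mathcal D$. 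The first step is to observe that in \emph{any} (non-Hausdorff) analytic space, if two distinct points $x\ne y$ cannot be separated, then their local rings, hence the germs of the space at $x$ and at $y$, must "agree" along the locus where they fail to be separated; more precisely a local model at $y$ forces a local model at $x$ of at least the same dimension in a neighborhood of the non-separated locus. This already constrains things: the germ at $t_p\in\mathcal C_p$ sits inside a $1$-dimensional analytic set, whereas $t_p'\in\mathcal D$ is an interior point of a $2$-dimensional domain, so any neighborhood of $t_p'$ is $2$-dimensional; a non-separating pair with these dimensions is impossible since the smaller neighborhood would have to embed into (a neighborhood inside) the larger local model with matching germ.

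**Executing the dimension argument.** Concretely, I would recall that for a non-Hausdorff analytic space, failure of Hausdorffness at a pair $(x,y)$ is detected by the diagonal: there is a sequence $z_n\to x$ and $z_n\to y$, and since near $x$ and near $y$ the space is cut out by holomorphic equations, the cluster behavior forces that a neighborhood of $x$ and a neighborhood of $y$ become isomorphic after removing a closed analytic subset (the "branching" set). In particular $\dim_x\mathcal T=\dim_y\mathcal T$ at such a pair. But $\dim_{t_p'}\mathcal T=2$ (it is an interior point of the domain $\mathcal D\subset\C^2$) while $\dim_{t_p}\mathcal T=1$ (a neighborhood of $t_p$ in $\mathcal C_p$ is, by the described topology, exactly the curve \eqref{nscurvep} plus nothing else of higher dimension near it, since the neighborhoods of $t_p\in\mathcal C_p$ do not see $\mathcal D$). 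This is the contradiction. To make "$\dim_{t_p}\mathcal T=1$" rigorous I would use that the topology near $t_p\in\mathcal C_p$ is that inherited from the curve together with the fact that adding $t_p$'s double does not add points near $t_p$; so any analytic local model at $t_p$ is a $1$-dimensional germ, while local models are preserved along non-separated pairs.

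**The main obstacle.** The delicate point is to pin down precisely the statement "local models agree along a non-separated pair" for \emph{non-Hausdorff} analytic spaces, since the standard references only discuss Hausdorff ones, and to ensure that the asymmetry of the topology (neighborhoods of $t_p$ in $\mathcal C_p$ being much smaller than those of $t_p'\in\mathcal D$) does not provide a loophole — in a non-Hausdorff space one might fear a "small" point glued onto a "big" one. The resolution is that even so, if $z_n\to t_p$ and $z_n\to t_p'$, then for large $n$ the $z_n$ lie in a common chart-overlap, and the $z_n$ are smooth $2$-dimensional points of $\mathcal D$; passing to the limit, the germ of $\mathcal T$ at $t_p$ must contain the limit of these $2$-dimensional germs, forcing $\dim_{t_p}\mathcal T\ge 2$, contradicting that $t_p$ has a $1$-dimensional neighborhood in $\mathcal C_p$. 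Thus the genuine work is the careful local analysis of convergence $z_n\to t_p$, $z_n\to t_p'$ in a hypothetical atlas, using that the maps \eqref{detrace} and the normal forms \eqref{Hopfnf}, \eqref{nscurvep} completely describe the point-set topology of \eqref{topoTeichHopf}. Once that local picture is fixed, the dimension mismatch $1\ne 2$ closes the argument, and the same reasoning applied to $\mathcal C$ in $\mathcal D_{\mathcal C}$ shows the obstruction already occurs for linear Hopf surfaces.
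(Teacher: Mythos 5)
Your argument breaks at the claim that $\dim_{t_p}\mathcal T(\mathbb S^3\times\mathbb S^1)=1$, i.e.\ that a neighborhood of the point $t_p\in\mathcal C_p$ in the Teichm\"uller space is just a piece of the curve \eqref{nscurvep}. The description of the topology only says that a sufficiently small neighborhood of $t_p$ misses the single corresponding point of $\mathcal D_{\mathcal C}$ (its ``double''); it does not say that such a neighborhood misses $\mathcal D$ altogether. In fact every neighborhood of $t_p$ in $\mathcal T(\mathbb S^3\times\mathbb S^1)$ contains the classes of all nearby non-resonant diagonal Hopf surfaces: arbitrarily small perturbations of the normal form $g_{\lambda,p}$ have non-resonant eigenvalues, hence are linearizable and diagonalizable, so a $2$-dimensional family of points of $\mathcal D$ accumulates on $t_p$ inside any saturated neighborhood. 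Thus both members of the non-separated pair have $2$-dimensional neighborhoods (the local structure at $t_p$ is in fact unobstructed), your dimension mismatch $1\neq 2$ never materializes, and your final step---that $\dim_{t_p}\geq 2$ contradicts the ``$1$-dimensional neighborhood''---contradicts a false premise rather than the hypothesis that $\mathcal T$ is analytic. There is also a slip in your statement of the asymmetry: some neighborhood of $t_p$ fails to \emph{contain} $t_p'$, but no neighborhood of $t_p$ is disjoint from a neighborhood of $t_p'$, since the pair is non-separated.

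The actual obstruction sits at the other point and requires no dimension theory: every open neighborhood of $t_p'=(\lambda^{p+1},\lambda+\lambda^p)\in\mathcal D_{\mathcal C}$ contains the corresponding point of $\mathcal C_p$, so no neighborhood of $t_p'$ is Hausdorff. Since an analytic space, even a non-Hausdorff one, is locally Hausdorff---it is locally homeomorphic to the zero set of holomorphic functions in $\C^n$---the space $\mathcal T(\mathbb S^3\times\mathbb S^1)$ admits no such structure. This is exactly the paper's one-line proof. Your auxiliary lemma that non-separated pairs in a non-Hausdorff analytic space have equal local dimension is plausible (upper semicontinuity of local dimension along a sequence converging to both points), but it is not needed, and in this example it produces no contradiction because the two local dimensions are both $2$.
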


\begin{proof}
An analytic space, even non-Hausdorff, is locally Hausdorff since it is locally modelled onto the zero set of some holomorphic functions in $\C^n$. This contradicts \eqref{topoTeichHopf} and the subsequent discussion.\qed
\end{proof}
 
\end{example}
\section{Artin analytic stacks}
\label{Artin}
As shown by Proposition \ref{TeichHopfnotAna}, the Teichm\"uller space does not always admit the structure of an analytic space, even locally. Hence, to see it as an analytic object, one needs to use the more general notion of analytic stacks. 

Let $\mathcal A$ be the category of (complex) analytic spaces and morphisms. We consider the euclidean topology on the analytic spaces. Especially, a covering is just an open covering for the euclidean topology. Fix some smooth manifold $M$ as in Section \ref{Intro}. By a $M$-deformation, we understand a smooth morphism $\mathcal X\to A$ over an analytic space all of whose fibers are complex manifolds diffeomorphic to $M$.

We consider the contravariant functor $\underline{\mathcal M}(M)$ from $\mathcal A$ to the category of groupoids\footnote{Recall that a groupoid is a category all of whose morphisms are invertible.} such that 
\begin{enumerate}
\item For $A$ an analytic space, $\underline{\mathcal M}(M)(A)$ is the groupoid formed by $M$-deformations over $A$ (objects) and isomorphisms of $M$-deformations (morphisms).
\item For $f$ a morphism from $A$ to $B$, $\underline{\mathcal M}(M)(f)$ is the natural pull back mapping from $\underline{\mathcal M}(M)(B)$ to $\underline{\mathcal M}(M)(A)$.
\end{enumerate}
This functor satisfies several properties, in particular 
\begin{enumerate}
\item {\bf Descent.} Given an open covering $(A_\alpha)$ of $A$, and $M$-deformations $\mathcal X_\alpha$ over $A_\alpha$ and a cocycle of isomorphisms over the intersections, there exists a unique $\mathcal X$ over $A$ obtained by gluing all the $\mathcal X_\alpha$.
\item {\bf Sheaf.} Given an open covering $(A_\alpha)$ of $A$, $M$-deformations $\mathcal X$ and $\mathcal X'$ over $A$, a collection of isomorphisms $f_\alpha$ over $A_\alpha$ between the $M$-deformations that coincide over the intersections glue into a unique morphism $f$.
\end{enumerate}
This is an example of {\it a stack over the site}\footnote{In fact, a stack is a $2$-functor but we will not go into that.} $\mathcal A$, cf. \cite{Fantechi}. This is the stack version of the moduli space \eqref{M3}. To obtain the stack version of \eqref{T}, we have to modify slightly the construction. A $M$-deformation over $A$ is smoothly a $M$ bundle over $A$ with structural group $\text{Diff}^+(M)$. The functor $\underline{\mathcal T}(M)$ associates to each $A$ the set of isomorphism classes of $M$-deformations over $A$ whose smooth bundle structure has structural group $\text{Diff}^0(M)$. Let us call {\it reduced} such a $M$-deformation.

Let us make a break. We propose to replace the Teichm\"uller space $\mathcal T(M)$ with a complicated contravariant functor $\underline{\mathcal T}(M)$ which describes all the $M$-deformations over analytic spaces with structural group $\text{Diff}^0(M)$. At first sight, there is no reason to do this. This is not even clear that we are dealing with something similar to the Teichm\"uller space.

The point here is that in many cases a stack can also be described by a single groupoid, called {\it atlas} or {\it presentation} of the stack. An atlas is far from being unique and we can look for a "nice" one. In our case we can choose as an atlas a groupoid that looks much more like the Teichm\"uller space and which is in fact an enriched version of it. To see this less theoretically, let us revisit the example of complex tori of dimension one.

\begin{example}
 \label{torusrevisited}
 We let again $M$ to be $\mathbb S^1\times\mathbb S^1$ with a fixed orientation. Then $\underline{\mathcal T}(M)$ describes all reduced deformations with complex tori as fibers and their morphisms. We claim that an atlas for this stack can be obtained as follows. Consider the universal family $\mathcal X\to\mathbb H$ where $\mathcal X$ is the quotient of $\C\times\mathbb H$ by the free and proper holomorphic action
 \begin{equation}
  \label{uf}
  (z,\tau,p,q)\in\C\times\mathbb H\times\Z\times\Z\longmapsto (z+p+q\tau,\tau)
 \end{equation}
This is a reduced $\mathbb S^1\times\mathbb S^1$-deformation. The fiber over $\tau$ is the complex torus $\mathbb E_\tau$ of lattice $(1,\tau)$. We rewrite this family as the following groupoid
\begin{equation}
 \label{torusgroupoid}
 \mathcal X\rightrightarrows \mathbb H
\end{equation}
with both arrows equal to $\pi$. This must be thought of as follows. The set of objects is $\mathbb H$, that is the Teichm\"uller space $\mathcal T(\mathbb S^1\times\mathbb S^1)$. The set of morphisms is $\mathcal X$ and the two maps are the projection on the source and the target of the morphism. In \eqref{torusgroupoid}, since both equals $\pi$, every morphism has same source and target $\tau$. The set of morphisms above $\tau$ is $\mathbb E_\tau$ and represents the group of translations of the complex torus $\mathbb E_\tau$. Here, we set the projection of $(0,\tau)\in\C\times\mathbb H$ in $\mathcal X$ to be the zero translation. Hence our groupoid is just $\mathcal T(\mathbb S^1\times\mathbb S^1)$ with its translation group above each complex torus $\tau$.

We will not prove our claim, this would force us to give many and many definitions, but we can give a heuristic interpretation of it. The key point is that if you are given an analytic space $A$, then every torus deformation above $A$ can be recovered from \eqref{torusgroupoid}. From the one hand, above a sufficiently small open set $A_\alpha$ of $A$, such a deformation is completely and uniquely up to isomorphism characterized by a holomorphic map from $A_\alpha$ to $\mathbb H$\footnote{This is the meaning of universal family in Kodaira-Spencer deformation theory.}. From the other hand, gluings of these families over $A_\alpha$ are completely characterized by maps from $A_\alpha\cap A_\beta$ to $\mathcal X$ since such gluings are translations along the fibers of the deformation.

Note that \eqref{torusgroupoid} is a complex Lie groupoid: its sets of objects and morphisms are complex manifolds and the source and target maps are holomorphic submersions. 
\end{example}

In the general case, an atlas for $\underline{\mathcal T}(M)$ is given by the action groupoid
\begin{equation}
\label{atlasgeneral}
\text{Diff}^0(M)\times \mathcal I(M)\rightrightarrows \mathcal I(M)
\end{equation}
with source map $s$ and target map $t$ defined as
\begin{equation}
\label{structuremaps}
s(f, J)=J\qquad\text{ and }\qquad t(f,J)=J\cdot f
\end{equation}
with $J\cdot f$ defined as the right hand side of \eqref{Jf}. Since there is a dictionnary between a stack and an atlas for it, \eqref{atlasgeneral} explains why we say that $\underline{\mathcal T}(M)$ is the stack version of $\mathcal T(M)$. 

Compared with \eqref{torusgroupoid}, this is no longer a complex Lie groupoid but an infinite dimensional object. And it does not help us to understand the Teichm\"uller space since it is just a rewriting of \eqref{T}. But recall that an atlas is not unique. So introducing  $\underline{\mathcal T}(M)$ is interesting only if we can find a nice and useful atlas of it. By nice, we think of a complex Lie groupoid, but we will see it is too much too expect. We need a singular version of a complex Lie groupoid. So we define

\begin{definition}
\label{artindef}
A stack over the site $\mathcal A$ is an {\it Artin analytic stack} if it admits an atlas $A_1\rightrightarrows A_0$ with $A_0$ and $A_1$ being finite dimensional complex analytic spaces and the source and target maps being smooth morphisms. Such a groupoid is called a {\it singular Lie groupoid}.
\end{definition}

The main result of \cite{LMStacks} is to prove that, under a mild uniform condition on the automorphism group of $M$ when considered as a complex manifold, $\underline{\mathcal T}(M)$ is an Artin analytic stack and to construct an explicit atlas with the properties of Definition \ref{artindef}. This is the best possible answer to the question of Section \ref{ex}. The Teichm\"uller space $\mathcal T(M)$ is not an analytic space but its stack version  $\underline{\mathcal T}(M)$ is Artin analytic.

\section{Foliations and holonomy groupoid}
\label{foliation}
To understand the stack structure of $\underline{\mathcal T}(M)$, we need to make a diversion through foliation theory. Stacks and groupoids are also useful to understand the structure of the leaf space of a foliation.

So assume we start with a smooth foliation $\mathcal F$ of a smooth manifold $M$. Then $\mathcal F$ is defined through charts with values in $\R^p\times\R^{n-p}$ such that the changes of charts are of the type
\begin{equation}
(x,t)\in \R^p\times\R^{n-p}\longmapsto (g(x,t), h(t))\in \R^p\times\R^{n-p}
\end{equation}
that is send plaques $\{t=Cst\}$ onto plaques. Gluing the plaques following the changes of charts gives the leaves, that is disjoint immersed submanifolds that form a partition of  $M$. Transverse local sections to $\mathcal F$ are given by $\{x=Cst\}$.

\begin{example}
\label{torusfoliation}
An easy but yet interesting example is that of a linear foliation of a torus. Consider the trivial foliation of $\R^2$ given by parallel straight lines making a fixed angle $\alpha$ with the horizontal. It descends on the torus $\R^2/\Z ^2$ as a foliation by curves with leaves wrapping around the torus. Its properties depend on the arithmetic type of $\alpha$.
\begin{enumerate}
\item If $\alpha=p/q$ is rational then the leaves of the foliation are closed curves diffeomorphic to $\mathbb S^1$ that make $p$ turns in the vertical direction and $q$ in the horizontal one. The foliation has only compact leaves.
\item If $\alpha$ is irrational then the leaves are diffeomorphic to $\mathbb R$ and are dense in the torus. The foliation is minimal.
\end{enumerate}
If we look for the leaf space, we can fix a meridian $T$ on our torus. It is a global transverse to the foliation. Each leaf $L$ will cut the circle $T$ in at least one point. More precisely, the intersection $L\cap T$ is an orbit of the rotation of angle $\alpha$ and the leaf space is given by the quotient of $T$ by the group $G$ generated by this rotation. 
\begin{enumerate}
\item If $\alpha=p/q$ is rational then $G$ is isomorphic to $\Z_q$ and $T/G$ identifies with $\mathbb S^1$.
\item If $\alpha$ is irrational then $G$ is isomorphic to $\Z$ and $T/G$ is not Hausdorff.
\end{enumerate}
\end{example}

Let us see now how stacks and groupoids can help us in defining the leaf space.

In Example \ref{torusfoliation}, identifying $T$ with $\mathbb S^1\subset \mathbb C$ and $G$ with the group generated by the rotation $z\mapsto r_\alpha(z):=\exp{2i\pi\alpha}z$, we may encode this as the Lie groupoid 
$\langle r_{\alpha}\rangle\times \mathbb S^1\rightrightarrows \mathbb S^1$
with source and target maps given by
\begin{equation}
s(g,z)=z\qquad\text{ and }\qquad t(g,z)=g(z)
\end{equation}
This is an example of an action Lie groupoid: the source map is the projection onto the second factor and the target map is given by the action, cf. \eqref{structuremaps}. Of course, just doing this is not enough. We have to think of this groupoid as defining a stack over the category of smooth manifolds. And as such, it is not the only groupoid defining this stack. Hence we have to think as the leaf space not as $G\times T\rightrightarrows T$ but as an equivalence class of groupoids.

There exists a general construction to encode the leaf space in a Lie groupoid: the \'etale holonomy groupoid. Roughly speaking, one starts with a foliated atlas of $\mathcal F$ and define as objects of the groupoid the disjoint union of a complete sets of local transverse sections. Then the set of morphisms encodes the holonomy morphisms. This is quite technical to do and we refer to \cite[\S 5.2]{MM} for more details. 

As in Example \ref{torusfoliation}, this groupoid is an atlas for a stack over the category of smooth manifolds. And this stack has lots of different atlases, all equivalent.
We will not give the precise definition of the equivalence relation needed here\footnote{It is called Morita equivalence and basically is an adaptation of equivalence of category in the world of smooth manifolds.}. Let us just say that this equivalence class is an enriched version of the topological quotient. It does not remember neither $M$ nor $\mathcal F$ but encodes the topological leaf space and moreover the smoothness of the initial construction (since it remains in the realm of Lie groupoids) and all the holonomy data. This is the best definition of a leaf space. In the case of Example \ref{torusfoliation}, if $\alpha$ is rational, then $G\times T\rightrightarrows T$ is equivalent to the trivial groupoid $\mathbb S^1\rightrightarrows \mathbb S^1$, that is the stack is just the manifold $\mathbb S^1$. However, if $\alpha$ and $\alpha'$ are irrational, then the corresponding Lie groupoids $G\times T\rightrightarrows T$ are equivalent if and only if $\alpha'=A\cdot \alpha$ as in \eqref{SL2action}, see \cite{Rieffel}. Stacks allow to distinguish the leaf spaces in the irrational case.

Of course everything works in the analytic context. If the foliation is holomorphic, then the \'etale holonomy groupoid is a complex Lie groupoid and defines a stack over the category of complex manifolds. If we look at regular foliations (i.e. leaves are manifolds) on a singular space, then everything works except that the \'etale holonomy groupoid is now a singular Lie groupoid  and the stack is Artin analytick in the sense of Definition \ref{artindef}.

\section{The Teichm\"uller stack}
\label{TeichStack}

We are now in position to give the main results of \cite{LMStacks} and the main ingredients of the proof. As before, we denote by $M$ a compact connected oriented even-dimensional smooth manifold, by $\mathcal T(M)$ its Teichm\"uller space.

\begin{definition}
\label{Tstack}
We call {\it Teichm\"uller stack} the stack $\underline{\mathcal T}(M)$ defined in Section \ref{Artin}.
\end{definition}

We have

\begin{theorem}[cf. \cite{LMStacks}]
\label{maintheorem}
Assume that there exists a constant $a\in\mathbb N$ such that, for all $J\in \mathcal T(M)$, the dimension of the automorphism group of the corresponding complex manifold $X_J$ is bounded by $a$.

Then $\underline{\mathcal T}(M)$ is an Artin analytic stack.
\end{theorem}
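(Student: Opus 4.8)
The plan is to construct, starting from the infinite-dimensional action groupoid $\text{Diff}^0(M)\times\mathcal I(M)\rightrightarrows\mathcal I(M)$ of \eqref{atlasgeneral}, a Morita-equivalent \emph{singular Lie groupoid} $A_1\rightrightarrows A_0$ with $A_0,A_1$ finite-dimensional analytic spaces and smooth structure maps, thereby verifying Definition \ref{artindef}. The guiding philosophy, as announced in the introduction, is that the $\text{Diff}^0(M)$-action on $\mathcal I(M)$ should be understood as a (generalized) holomorphic foliation, whose leaves are the $\text{Diff}^0(M)$-orbits, and that the Teichm\"uller stack is its leaf space; so the atlas will be a finite-dimensional avatar of the \'etale holonomy groupoid discussed in Section \ref{foliation}.

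First I would replace $C^\infty$ objects by Sobolev completions so that, as recalled after \eqref{T}, $\mathcal I(M)$ becomes a Banach analytic space, $\text{Diff}^0(M)$ a Banach (Hilbert) Lie group acting holomorphically, and one has genuine ellipticity at hand. Next, for a fixed $J_0\in\mathcal I(M)$ I would invoke Kuranishi theory: there is a finite-dimensional analytic subspace $K_{J_0}\hookrightarrow\mathcal I(M)$ (the Kuranishi space, cut out inside $H^1(X_{J_0},\Theta)$ by the obstruction map to $H^2$) which is a slice for the action transverse to the orbit, and such that every nearby complex structure is $\text{Diff}^0$-equivalent to a point of $K_{J_0}$. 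I would then set $A_0:=\bigsqcup_i K_{J_i}$ for a suitable countable family $\{J_i\}$ whose Kuranishi neighborhoods cover $\mathcal I(M)/\text{Diff}^0(M)$; this is the finite-dimensional replacement for the complete transversal of a foliated atlas. For $A_1$ I would take the fibered product describing, for $J\in K_{J_i}$ and $J'\in K_{J_j}$, the set of germs of diffeomorphisms in $\text{Diff}^0(M)$ carrying $J$ to $J'$, modulo those acting trivially — i.e. the space of holonomy transformations between the transversals — together with the residual (finite-dimensional, by hypothesis!) automorphism groups $\text{Aut}(X_J)\cap\text{Diff}^0(M)$ appearing along the diagonal, exactly as the translation groups $\mathbb E_\tau$ appear in \eqref{torusgroupoid}. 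The source and target maps are the obvious projections/holonomy evaluations, and the content to check is that they are smooth morphisms of finite-dimensional analytic spaces and that $A_1\rightrightarrows A_0$ is Morita-equivalent to \eqref{atlasgeneral}.

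The two technical pillars are: (i) a relative/parametrized Kuranishi theorem giving these slices holomorphically in a way that patches — so that the holonomy maps between overlapping slices are holomorphic, which requires controlling how the Kuranishi construction depends on the base point and on the choice of elliptic operators; and (ii) the identification, via the implicit function theorem on the Banach manifolds, of the ``transverse'' part of $\text{Diff}^0(M)$ near an orbit with a finite-dimensional family, so that $A_1$ is genuinely finite-dimensional. This is precisely where the hypothesis on $\text{Aut}(X_J)$ is indispensable: without a uniform bound $a$ on $\dim\text{Aut}(X_J)$ the isotropy groups — the fibers of $A_1\to A_0\times A_0$ over the diagonal — would vary in dimension without control and the total space $A_1$ would fail to be a reasonable finite-dimensional analytic space (this is the mechanism behind jumping phenomena, and morally behind Proposition \ref{TeichHopfnotAna}), so the smoothness of the source/target maps would break down. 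Finally one must argue that the resulting groupoid indeed presents $\underline{\mathcal T}(M)$ and not merely $\underline{\mathcal M}(M)$, using the reduced-deformation condition (structural group $\text{Diff}^0(M)$) built into the functor in Section \ref{Artin}, and that Morita equivalence makes the choices of $\{J_i\}$ and of the Kuranishi data irrelevant.

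The main obstacle I expect is pillar (i): making Kuranishi slices depend analytically on the base point and glue into a single finite-dimensional $A_0$ with holomorphic holonomy transition data. Kuranishi's theorem is pointwise and its output is canonical only up to non-canonical choices (a splitting of the elliptic complex, a local trivialization), so upgrading it to a statement with good functoriality in the base — the analogue of passing from a single foliation chart to a foliated atlas with holomorphic changes of charts — is the crux, and is where the real work of \cite{LMStacks} should lie; everything else (finite-dimensionality of isotropy from the hypothesis, verification of the groupoid axioms, the Morita equivalence with \eqref{atlasgeneral}) is comparatively formal once this parametrized Kuranishi machinery is in place.
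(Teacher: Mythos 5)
Your overall strategy is the paper's: view the $\text{Diff}^0(M)$-orbits as a foliation-like structure on (the Sobolev completion of) $\mathcal I(M)$, use Kuranishi spaces as the finite-dimensional transverse data, build a holonomy-type groupoid over a disjoint union of Kuranishi spaces, and check it is Morita-equivalent to the action groupoid \eqref{atlasgeneral}. But there is a genuine gap in how you treat positive-dimensional automorphism groups, and it is exactly the point the paper identifies as the most serious difficulty. When $\text{Aut}^0(X_{J_0})$ is non-trivial, Theorem \ref{Kuranishi} gives a local product $K_0\times(\text{Diff}^0(M)/\text{Aut}^0(X_0))$, so the ``plaques'' depend on the chart: they cannot be glued into leaves, there is no foliation in the classical sense, and $K_0$ is \emph{not} a transversal --- nearby orbits meet it along positive-dimensional sets (the $\text{Aut}^0(X_0)$-orbits in $K_0$, reflecting jumping of automorphisms). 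Consequently your description of $A_1$ as germs of holonomy transformations between the slices, with the automorphism groups appearing only ``along the diagonal'' as in the torus groupoid \eqref{torusgroupoid}, is not correct in general: the fiber of $A_1\to A_0\times A_0$ over any pair $(J,J')$ in the same orbit is a torsor under $\text{Aut}(X_J)\cap\text{Diff}^0(M)$, so positive-dimensional isotropy-type pieces occur everywhere, not just on the diagonal, and holonomy ``maps'' between transversals must be replaced by morphisms of the quotient stacks $[K_0/\text{Aut}^0(X_0)]$ (where, moreover, $\text{Aut}^0(X_0)$ only ``acts'' on $K_0$ in a generalized sense). This is why the paper recasts the local model as \eqref{reloaded}, introduces foliations transversely modelled on translation groupoids (TG foliations), and rebuilds the holonomy machinery in that setting; the resulting atlas has smooth but \emph{not} \'etale source and target maps (Remark \ref{Artinvsetale}), whereas your construction implicitly aims at an \'etale-style groupoid, which can only exist when \eqref{noaut} holds.

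Relatedly, your assessment of where the work lies is inverted relative to the paper: you place the crux in a parametrized Kuranishi theorem with holomorphic dependence on the base point and call the rest ``comparatively formal,'' while the paper presents the adaptation of the \'etale holonomy groupoid construction to TG foliations --- i.e.\ precisely the handling of the non-trivial $\text{Aut}^0$-actions on the Kuranishi spaces --- as the technical core, with an important gap from the classical theory of Section \ref{foliation}. Your use of the hypothesis is also slightly off: the uniform bound $a$ on $\dim\text{Aut}(X_J)$ is not needed to make source and target smooth, but to guarantee that the constructed atlas is finite-dimensional (and without it one stratifies $\mathcal T(M)$ accordingly). In the automorphism-free case your proposal does essentially reproduce the paper's first case; to repair the general case you must replace ``Kuranishi slice $=$ transversal, holonomy $=$ germs of maps'' by ``transversal $=$ the stack $[K_0/\text{Aut}^0(X_0)]$, holonomy $=$ groupoid-level data,'' which is the missing idea.
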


Before explaining the main ideas of the proof, some important remarks have to be done.
\begin{enumerate}
\item The proof is constructive and geometric. It builds a concrete singular Lie groupoid as atlas for $\underline{\mathcal T}(M)$ which comes from the existence of a geometric structure (a foliated structure) of $\mathcal I(M)$. This is perhaps the most interesting aspect of the result.
\item The hypothesis is used to control that the constructed atlas is finite-dimensional. In any case this is a mild restriction since we may easily stratify $\mathcal T(M)$ by strata satisfying the hypotheses for a given $a$. Classical results of Grauert ensure that this gives a nice analytic stratification, see \cite{LMStacks} for more details.
\end{enumerate}

The crucial idea is to understand that the action of $\text{Diff}^0(M)$ defines  a "generalized foliation" on $\mathcal I(M)$ so that the construction of a "generalized \'etale holonomy groupoid" can be carried out. To do this, many technical problems have to be overcome, the most serious one being the presence of non-trivial automorphisms (remark that the isotropy groups of the actions are constituted by automorphisms).

\medskip

\noindent $\underline{\text{1st case:}} \text{ no automorphisms.}$

\smallskip

Here we assume that, for all $J\in \mathcal T(M)$, we have 
\begin{equation}
\label{noaut}
\text{Aut }(X_J)\cap\text{Diff}^0(M)=\{Id\}
\end{equation}

Hypothesis \eqref{noaut} exactly means that the $\text{Diff}^0(M)$ action is free. It is thus natural to expect that it defines a foliation in some sense. Now, this is a reformulation of Kuranishi's theorem of existence of a versal space \cite{Kur}.

\begin{theorem}[Kuranishi, 1962]
\label{KuranishiNoAut}
Let $X_0=(M,J_0)$ be a compact complex manifold whose underlying smooth structure is $M$. Assume {\rm \eqref{noaut}}. Then there exists a finite-dimensional analytic space $K_0$ such that the space $\mathcal I(M)$ is locally isomorphic to $K_0\times \text{\rm Diff}^0(M)$ in a neighborhood of $J_0$.
\end{theorem}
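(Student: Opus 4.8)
The plan is to derive the local product structure from Kuranishi's theory of the semi-universal (versal) deformation together with the transversality of the $\text{Diff}^0(M)$-orbit, exploiting the freeness hypothesis \eqref{noaut} to upgrade the versal deformation to a local slice for the group action. Fix $J_0$ and work in the Sobolev completion, so that $\mathcal I(M)$ is a Banach analytic space and $\text{Diff}^0(M)$ a Banach Lie group acting holomorphically. The first step is to recall Kuranishi's construction: the space of integrable almost-complex operators near $J_0$ is cut out inside the infinite-dimensional space of all (not necessarily integrable) operators by the integrability condition, and harmonic theory on $X_0 = (M,J_0)$ — the Hodge decomposition of the $\bar\partial$-complex with values in $T^{1,0}X_0$ — produces a finite-dimensional analytic germ $K_0 \subset H^1(X_0, \Theta)$, the Kuranishi space, carrying a deformation $\mathcal X \to K_0$ which is complete at every point near $J_0$. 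Concretely this yields a finite-dimensional analytic submanifold-with-singularities $\Sigma \subset \mathcal I(M)$ through $J_0$, analytically isomorphic to $K_0$, whose tangent space at $J_0$ is a complement to the image of the infinitesimal action $\mathfrak{diff}(M) \to T_{J_0}\mathcal I(M)$, i.e. to the tangent space of the orbit.

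The second step is to build the local isomorphism $K_0 \times \text{Diff}^0(M) \to \mathcal I(M)$ near $J_0$. The natural candidate is the action map $(k, f) \mapsto \Phi(k) \cdot f$, where $\Phi\colon K_0 \xrightarrow{\sim} \Sigma \hookrightarrow \mathcal I(M)$ is the Kuranishi slice. Its differential at $(0,\mathrm{Id})$ is, by construction of $\Sigma$, an isomorphism: injectivity on the $K_0$-factor is the choice of $T_{J_0}\Sigma$, surjectivity is that $T_{J_0}\Sigma$ together with the orbit tangent space span $T_{J_0}\mathcal I(M)$, and injectivity on the $\text{Diff}^0(M)$-factor together with transversality gives that the sum is direct — here the freeness \eqref{noaut} guarantees the infinitesimal action is injective, so the orbit is an honest Banach submanifold and the orbit tangent space has the expected size. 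An inverse function theorem in the Banach analytic category (in the sense of Douady, \cite{Do}) then makes the action map a local biholomorphism, and restricting it to a suitable neighborhood gives the asserted local isomorphism $\mathcal I(M) \simeq K_0 \times \text{Diff}^0(M)$.

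The third step, which one must not skip, is to descend from the Sobolev setting back to the $C^\infty$ category: one checks, via elliptic regularity for the $\bar\partial$-operator, that the slice $\Sigma$ consists of smooth operators and that the identification is independent of the Sobolev index $l$, so the statement holds for the $C^\infty$-topology as well. Finally, one should remark that the completeness of the Kuranishi family is precisely what makes the map surjective onto a full neighborhood — every nearby integrable $J$ is the pullback of $\mathcal X \to K_0$ by some map, hence lies on the $\text{Diff}^0(M)$-orbit of some point of $\Sigma$.

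\textbf{The main obstacle} I anticipate is the transversality/surjectivity argument: showing that the Kuranishi slice $\Sigma$ really is a local slice for the $\text{Diff}^0(M)$-action, i.e. that the action map is surjective onto a neighborhood and not merely that its differential is surjective. In infinite dimensions a surjective differential does not automatically give local surjectivity, so one needs the genuine analytic inverse function theorem of Douady, which in turn requires checking that the action map is a morphism of Banach analytic spaces and that the relevant spaces are — after the Sobolev completion — honest Banach analytic spaces with the right local models. Equivalently, this is the content of Kuranishi's completeness theorem repackaged, and the delicate point is the interplay between the finite-dimensional obstruction space $H^2(X_0,\Theta)$ (which makes $K_0$ singular) and the implicit function theorem producing $\Sigma$; the freeness hypothesis \eqref{noaut} is exactly what removes the isotropy that would otherwise obstruct the slice from being a product factor.
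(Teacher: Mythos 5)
Your proposal is essentially the argument the paper is pointing to: the paper gives no proof of this statement, presenting it as a reformulation of Kuranishi's versality theorem with a citation to \cite{Kur} (and to the Banach-analytic setting of \cite{Do}), and your outline --- harmonic-theoretic construction of the Kuranishi germ $K_0$, the slice $\Sigma$, the action map $(k,f)\mapsto \Phi(k)\cdot f$ made invertible by Douady's inverse function theorem after Sobolev completion, with \eqref{noaut} killing $H^0(X_0,\Theta)$ and hence the isotropy, followed by elliptic regularity to return to the $C^\infty$ category --- is precisely the standard Kuranishi--Douady proof. So your approach matches the one the paper relies on, and the points you flag as delicate (analyticity of the action map despite loss of derivatives, and completeness giving surjectivity of the slice map) are indeed the places where the cited references do the real work.
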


Of course this local isomorphism preserves locally the action, so this gives really a foliated chart for the action. The plaques are open neighborhoods of the identity in the Fr\'echet manifold $\text{Diff}^0(M)$ and a transverse local section is given by the analytic space $K_0$. So this is an infinite dimensional but of finite codimension foliation of an infinite-dimensional analytic space, cf. Section \ref{Intro}, just before Remark \ref{cc}.

In this situation, we can carry out the construction of the \'etale holonomy groupoid. Only slight adaptations have to be done. Note that the finite codimension of the foliation ensures the finite dimensionality of the holonomy groupoid. 

\medskip

\noindent $\underline{\text{2nd case:}}\text{ general case.}$
Kuranishi's theorem takes now the following more general form.
\smallskip

\begin{theorem}[Kuranishi, 1962]
\label{Kuranishi}
Let $X_0=(M,J_0)$ be a compact complex manifold whose underlying smooth structure is $M$. Let $\text{\rm Aut}^0(X_0)$ be the connected component of the identity in the automorphism group of $X_0$. Then 
\begin{enumerate}
\item[\rm  1.] A neighborhood of the identity in the quotient space $(\text{\rm Diff}^0(M)/\text{\rm Aut}^0(X_0))$ is a Fr\'echet manifold.
\item[\rm 2.] There exists a finite-dimensional analytic space $K_0$ such that the space $\mathcal I(M)$ is locally isomorphic to $K_0\times (\text{\rm Diff}^0(M)/\text{\rm Aut}^0(X_0))$ in a neighborhood of $J_0$.
\end{enumerate}
\end{theorem}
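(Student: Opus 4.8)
The plan is to reduce everything to elliptic Hodge theory on $X_0$ together with two coupled applications of the implicit function theorem in Banach (Sobolev) completions, and then to pass to the $C^\infty$ limit. First I would replace $C^\infty$ objects by their Sobolev $L^2_l$ completions ($l$ large), so that, as recalled in Section~\ref{Intro}, $\mathcal I(M)$ is near $J_0$ a Banach complex analytic space, cut out in a small ball of $L^2_l(X_0,\Lambda^{0,1}T^{1,0}X_0)$ by the Maurer--Cartan (integrability) operator
\begin{equation}
\phi\longmapsto \bar\partial_0\phi+\tfrac12[\phi,\phi],
\end{equation}
where $\phi$ is the Beltrami-type form encoding the complex structure $J_\phi$ close to $J_0$ and $\bar\partial_0$ is the Dolbeault operator of $X_0$. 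The group $\text{Diff}^0(M)$ (in its $L^2_{l+1}$ completion, a Hilbert manifold acting holomorphically) acts on this ball through an induced action on Beltrami forms, still written $\cdot$, whose infinitesimal action at $J_0$ is $v\mapsto\bar\partial_0 v$ on vector fields, with kernel $H^0(X_0,T^{1,0}X_0)=\text{Lie}\,\text{Aut}^0(X_0)$.

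Second, I would invoke the Hodge decomposition for the elliptic complex $(\Lambda^{0,\bullet}T^{1,0}X_0,\bar\partial_0)$: writing $G$ for the Green operator of the $\bar\partial_0$-Laplacian and $\mathcal H^q=H^q(X_0,T^{1,0}X_0)$ for the finite-dimensional harmonic spaces, one has the usual decomposition $\phi=(\text{harmonic part})+\bar\partial_0\bar\partial_0^*G\phi+\bar\partial_0^*\bar\partial_0G\phi$. In particular $\text{Aut}^0(X_0)$ is a finite-dimensional complex Lie group with Lie algebra $\mathcal H^0$, which is the toehold for assertion~1. The Kuranishi slice is then built classically: by the contraction principle the equation $\phi=\eta+\tfrac12\bar\partial_0^*G[\phi,\phi]$ has, for $\eta\in\mathcal H^1$ small, a unique small solution $\phi(\eta)$ depending analytically on $\eta$ and satisfying $\bar\partial_0^*\phi(\eta)=0$, and $\phi(\eta)$ is integrable precisely when $\eta$ lies in the finite-dimensional analytic subgerm $K_0=\{\eta\in\mathcal H^1:\ \text{the harmonic part of }[\phi(\eta),\phi(\eta)]\text{ vanishes}\}$, the obstruction living in $\mathcal H^2$. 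This produces the analytic space $K_0$.

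Third comes the slicing argument, where the presence of automorphisms is the real difficulty. For assertion~1 I would use that $\text{Aut}^0(X_0)\subset\text{Diff}^0(M)$ is a finite-dimensional Lie subgroup whose Lie algebra $\mathcal H^0$ admits a closed complement (its $L^2$-orthogonal, via Hodge theory) in the Lie algebra of $\text{Diff}^0(M)$ at each Sobolev level; a local-slice argument then exhibits $\text{Diff}^0(M)/\text{Aut}^0(X_0)$ as a Hilbert manifold near the identity coset, and the projective limit over $l$ gives the Fréchet manifold structure. For assertion~2 I would show that every integrable $\phi$ near $0$ can be moved, by an element of $\text{Diff}^0(M)$ close to the identity that is unique modulo $\text{Aut}^0(X_0)$, into the gauge slice $\{\bar\partial_0^*\phi=0\}$, hence onto some $\phi(\eta)$ with $\eta\in K_0$: one applies the implicit function theorem not to $(f,\phi)\mapsto\bar\partial_0^*(\phi\cdot f)$ itself — whose partial linearization $(v,\psi)\mapsto\bar\partial_0^*\psi+\bar\partial_0^*\bar\partial_0 v$ has kernel exactly $\mathcal H^0$ in the $v$-variable, so it is not submersive — but to the map it induces on $\text{Diff}^0(M)/\text{Aut}^0(X_0)$, using assertion~1 and the fact that $\bar\partial_0^*\psi\perp\mathcal H^0$ always holds, so that this induced partial linearization is onto. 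Composing with a local section of $\text{Diff}^0(M)\to\text{Diff}^0(M)/\text{Aut}^0(X_0)$ then yields the local isomorphism $\mathcal I(M)\cong K_0\times(\text{Diff}^0(M)/\text{Aut}^0(X_0))$ near $J_0$, and elliptic regularity shows the gauge-transforming diffeomorphism is as smooth as $\phi$, so the $C^\infty$ statement follows on letting $l\to\infty$.

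The main obstacle is thus the non-freeness of the $\text{Diff}^0(M)$-action: one must replace it by a free action of $\text{Diff}^0(M)/\text{Aut}^0(X_0)$ \emph{before} any implicit-function-theorem argument, so assertions~1 and~2 are genuinely intertwined and must be carried through the Banach approximations simultaneously, with the constructions checked to be compatible across the levels $l$ in order to descend to the Fréchet category. The delicate point is not the Hodge theory or the Kuranishi slice, which are standard, but establishing that $\text{Aut}^0(X_0)$ really is a finite-dimensional Lie group with split Lie algebra acting properly, so that the quotient slice theorem applies and the gauge fixing becomes a bona fide submersion.
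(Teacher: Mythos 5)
Your sketch cannot really be compared with ``the paper's own proof'': the survey does not prove Theorem~\ref{Kuranishi} at all, it quotes it from Kuranishi \cite{Kur} in the form given in \cite{LMStacks}. Measured against that classical argument, your overall architecture is the right one: Sobolev completions, the Maurer--Cartan description of $\mathcal I(M)$ near $J_0$, the Hodge/Green-operator construction of the slice $\phi(\eta)$ and of $K_0$ as the zero set of the harmonic obstruction, and a gauge-fixing implicit function theorem performed on the quotient rather than on $\text{Diff}^0(M)$ itself. Two secondary points deserve more care than you give them: the Lie group structure of $\text{Aut}^0(X_0)$ is Bochner--Montgomery, not a consequence of Hodge theory alone; and at a fixed Sobolev level composition of diffeomorphisms is not smooth, which is why the framework recalled in Section~\ref{Intro} realizes $\text{Diff}^0(M)_{L^2_{l+1}}$ inside the complex Hilbert manifold of $L^2_{l+1}$ maps $M\to X_0$ before any implicit-function-theorem or slice argument for assertion~1.

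The genuine gap is in the passage from the gauge fixing to assertion~2. First, ``unique modulo $\text{Aut}^0(X_0)$'' does not parse: the slice $\{\bar\partial_0^*\phi=0\}$ is not $\text{Aut}^0(X_0)$-invariant, since an automorphism commutes with $\bar\partial_0$ but not with $\bar\partial_0^*$ unless it is an isometry of the chosen metric. What your quotient IFT actually yields is, for each integrable $\phi$ near $0$, a class $[f_\phi]$ near $[\mathrm{id}]$, locally unique, with $\phi\cdot f_\phi$ in the slice. To get the product decomposition you then ``compose with a local section $\sigma$'', but this does not close the argument: writing $f_\phi^{-1}=a\,\sigma(w)$ with $a\in\text{Aut}^0(X_0)$, one finds $\phi(\eta)\cdot\sigma(w)=\phi\cdot\bigl(f_\phi\,a^{-1}f_\phi^{-1}\bigr)$, which differs from $\phi$ unless $a$ happens to be an automorphism of the \emph{nearby} fibre $X_{\phi(\eta)}$, and the automorphism groups of nearby fibres are neither contained in nor equal to $\text{Aut}^0(X_0)$ in general. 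Equivalently, injectivity of $(\eta,[f])\mapsto\phi(\eta)\cdot\sigma([f])$ reduces to the statement that if $\phi(\eta')=\phi(\eta)\cdot g$ with $\eta,\eta'$ small and $g$ close to the identity, then $g\in\text{Aut}^0(X_0)$: nearby Kuranishi fibres can indeed be isomorphic to one another (versality is not universality; Hopf surfaces in Example~\ref{Hopfexample} exhibit exactly this), and controlling \emph{by which} diffeomorphisms they are identified is the actual content of the general case --- it is why \eqref{reloaded} involves a quotient stack and why $\text{Aut}^0(X_0)$ only ``acts'' on $K_0$ in a generalized sense. As written, this decisive step is asserted rather than proved; you would need either to establish the conjugation statement above or to reorganize the construction of the isomorphism along the lines of Kuranishi's and Douady's arguments.
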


As in the previous case, this local isomorphism preserves locally the action, but this time this  does not give a foliated chart for the action. The problem is that the plaques are now modelled onto $(\text{\rm Diff}^0(M)/\text{\rm Aut}^0(X_0))$, i.e. depends on the automorphism group of the base manifold $X_0$. Hence plaques of different charts cannot be glued. There is no leaf to be constructed from the plaques.

Now we can reformulate Theorem \ref{Kuranishi} as giving a local isomorphism at $J_0$ between $\mathcal I(M)$ and the product
\begin{equation}
\label{reloaded}
\text{\rm Diff}^0(M)\times [K_0/\text{\rm Aut}^0(X_0)]
\end{equation}
where the brackets mean that we consider the right hand side as an Artin analytic stack. In other words, $\text{\rm Aut}^0(X_0)$ acts\footnote{In fact, this is not exactly an action, see \cite{LMStacks} for more details.} on $K_0$ and we consider its quotient as a stack. In foliated terms, we force the plaques to be open neighborhoods of the identity in the Fr\'echet manifold $\text{Diff}^0(M)$. This is possible subject to the condition that we let the transverse sections to be analytic stacks rather than analytic spaces.

Then \eqref{reloaded} can be interpreted as a foliated chart in a generalized sense and the gluings will respect this foliated structure. In \cite{LMStacks}, we call it a {\it foliation transversely modelled on a translation groupoid} or in short a {\it TG foliation}. 

The next step consists in showing how to adapt the machinery of holonomy \'etale groupoid to the world of TG foliations. This forms the technical core of \cite{LMStacks}. The gap with the classical theory is important and lots of work is needed. We will not get into that, since we attain our assigned goal: give a comprehensive introduction to the results and objects of \cite{LMStacks}.

\begin{remark}
\label{Artinvsetale}
In the realm of Section \ref{foliation}, we get an \'etale holonomy groupoid. This is more than a singular Lie groupoid since the source and target maps are not only smooth morphisms but also \'etale morphisms. In the general case, the holonomy groupoid associated to a TG foliation has no more this property. 
\end{remark}

\section{The Teichm\"uller stack of Hopf surfaces}
\label{appli}
In this last section, we shall briefly report on work in progress by C. Fromenteau on the Teichm\"uller stack of $\mathbb S^3\times\mathbb S^1$. We explain in Example \ref{Hopfexample} the quite complicated topology of the Teichm\"uller {\it space} of $\mathbb S^3\times\mathbb S^1$ as well as the different normal forms for the associated complex structures. It is known that the automorphism group of a Hopf surface has dimension $2$, $3$ or $4$ depending on the normal form. Hence the hypothesis of Theorem \ref{maintheorem} is fulfilled and $\underline{\mathcal T}(\mathbb S^3\times\mathbb S^1)$ is an Artin analytic stack. The construction of the holonomy groupoid refered to in Section \ref{TeichStack} does not give a useful atlas in this case. In particular its set of objects has countably many connected components. To really work with $\underline{\mathcal T}(\mathbb S^3\times\mathbb S^1)$ we need another atlas.

C. Fromenteau gave a much nicer atlas that can be described as follows. Let $G$ be the Lie group biholomorphic to $\text{GL}_2(\C)\times\C$ as a complex manifold but with the following product rule
\begin{equation}
\label{product}
(A,t)\ast (B,s)=(AB, t+s\det A)
\end{equation}
Let $M$ be the product $\text{GL}_2^c(\C)\times\C$. Then one may define 
\begin{enumerate}
\item a holomorphic action $\cdot$ of $G$ onto $M$.
\item a holomorphic injection $\imath$ of $M$ into $G$
\end{enumerate}
such that  the Lie groupoid $(G\times M)/\Z\rightrightarrows M$ is an atlas of $\underline{\mathcal T}(\mathbb S^3\times\mathbb S^1)$. Here the $\Z$-action is defined as
\begin{equation}
\label{Zaction}
(p,g,m)\in\Z\times G\times M\longmapsto (\imath (m)^pg, m)
\end{equation} 
and the source and target maps are the projections of the maps\footnote{The action groupoid $G\times M \rightrightarrows M$ with source and target maps defined in \eqref{onceagain} is an atlas for the stack of reduced $\mathbb S^3\times \mathbb S^1$-deformations admitting a covering $\C^2\setminus \{(0,0)\}$-deformation plus a choice of a base point in the covering family. Together with $\imath$, this forms a gerbe with band $\Z$.} 
\begin{equation}
\label{onceagain}
(g,m)\in G\times M\longmapsto m\qquad\text{ and }\qquad (g,m)\mapsto m\cdot g
\end{equation}
 One interest of this atlas is for cohomological computations. For example,  $\underline{\mathcal T}(\mathbb S^3\times\mathbb S^1)$ has well defined de Rham cohomology groups, cf. \cite{Behrend}. This cohomology is different from the cohomology of the topological space ${\mathcal T}(\mathbb S^3\times\mathbb S^1)$ since it takes also into account the cohomology of the automorphism groups of Hopf surfaces. 
 
 In any case, these cohomology groups are very difficult to compute in general. Having such an atlas makes the calculations possible. Roughly speaking, they are just the equivariant cohomology groups of the action $\cdot$ of $G$ onto $M$. Here, with more work, one can compute them and show that the generators in dimension $2$ can be realized as non-trivial holomorphic bundles above $\mathbb P^1$ with fiber a Hopf surface (which must be thought of as isotrivial but not trivial $\mathbb S^1\times\mathbb S^3$-deformation above $\mathbb P^1$). It is not clear however whether the generator of the first cohomology group (which is equal to $\C$) can be realized as a {\it holomorphic} $\mathbb S^1\times\mathbb S^3$-deformation above a compact Riemann surface.

\begin{acknowledgement}
Many thanks to Daniele Angella, Paolo de Bartolomeis, Costantino Medori and Adriano Tomassini for organizing this beautiful conference in Cortona.
\end{acknowledgement}

\end{document}